\numberwithin{equation}{section} \theoremstyle{plain}
\newtheorem{thm}{Theorem}[section]
\newtheorem{lem}[thm]{Lemma}
\def\<{\langle}
\def\>{\rangle}
\def\({\left(}
\def\){\right)}
\def\[{\left[}
\def\]{\right]}
\title{Index estimates for free boundary $f$-minimal hypersurfaces}
\author[N. Chen]{Niang Chen}
\address{School of Mathematical Sciences, Laboratory of Mathematics and Complex Systems, Beijing Normal University, Beijing 100875, P.R. CHINA.}
\email{chenniangbnu@mail.bnu.edu.cn}
\author[J.Q. Ge]{Jianquan Ge}
\address{School of Mathematical Sciences, Laboratory of Mathematics and Complex Systems, Beijing Normal University, Beijing 100875, P.R. CHINA.}
\email{jqge@bnu.edu.cn}
\author[M.M. Zhang]{Miaomiao Zhang$^{*}$}
\address{School of Mathematical Sciences, Laboratory of Mathematics and Complex Systems, Beijing Normal University, Beijing 100875, P.R. CHINA.}
\email{miaomiaozhang@mail.bnu.edu.cn}
\subjclass[2010]{53C42, 53C21.}
\date{}
\keywords{index, $f$-minimal hypersurfaces, free boundary, the first Betti number.}
\thanks {$^{*}$ the corresponding author.}
\thanks{The project is partially supported by Beijing Natural Science Foundation (No. Z190003), NSFC (No. 12171037) and the Fundamental Research Funds for the Central Universities.}
\begin{document}
\maketitle

\begin{abstract}
We prove that the index is bounded from below by a linear function of its first Betti number for any compact free boundary $f$-minimal hypersurface in certain positively curved weighted manifolds.
\end{abstract}

\section{Introduction}

Given a Riemannian manifold $(\mathcal{N}^{n+1}, g)$, the free boundary problem consists of finding critical points of the area functional among all compact hypersurfaces $M^n \subset \mathcal{N}$ with $\partial M \subset \partial\mathcal{N}$. Critical points for this problem are minimal hypersurfaces $M \subset \mathcal{N}$ meeting $\partial\mathcal{N}$ orthogonally along $\partial M$, called free boundary minimal hypersurfaces. There are many comparison results between the Morse index and the topology of free boundary minimal hypersurfaces (cf. \cite{Ambrozio, ACS1, AH, BW, BPS, BK, FL}, etc.).

Via embedding the ambient manifold (certain positively curved) in a Euclidean space and using the coordinates of $N\wedge \omega^{\sharp}$ as test functions (where $N$ is a unit normal vector field and $\omega$ is a harmonic 1-form of the hypersurface), Ambrozio,  Carlotto and  Sharp \cite{ACS} proved that the index of any closed minimal hypersurface is bounded from below by a linear function of its first Betti number.
In \cite{ACS1} they used this method to establish such index lower bound for free boundary minimal hypersurfaces in a general manifold with similar extrinsic assumptions (embedding the ambient manifold in a Euclidean space with some integral inequality assumption about curvatures).
In \cite{IRS}, Impera,  Rimoldi and  Savo obtained such index bound for closed and noncompact complete $f$-minimal hypersurfaces in the weighted manifold $(\mathbb{R}^{n+1}, g_{\mathrm{can}}, e^{-f}dV_{\mathbb{R}^{n+1}})$.  Impera and  Rimoldi \cite{IR} obtained the index of closed $f$-minimal hypersurfaces immersed in a general weighted manifold with similar extrinsic assumptions.

In this paper, as a complement to the results above, we consider the index estimates for compact free boundary $f$-minimal hypersurfaces in general weighted manifolds with similar extrinsic assumptions.

\begin{thm}\label{thm-1}
Let $M^{n}$ be a compact, orientable, free boundary $f$-minimal hypersurface of a weighted Riemannian manifold $({\mathcal{N}^{n+1}}, g, e^{-f}dV_\mathcal{N})$. Let $\mathcal{N}^{n+1}$  be isometrically immersed in some Euclidean space $\mathbb{R}^{d}$.
\begin{itemize}
\item[(1)] Assume that for any nonzero tangent $f$-harmonic $1$-form $\omega\in \mathcal{H}_{N f}^{1}(M)$,
\begin{align}
&\int_M\Big(\operatorname{Ric}_{f}^{\mathcal{N}}(\omega^{\sharp}, \omega^{\sharp})+\operatorname{Ric}_{f}^{\mathcal{N}}(N, N)|\omega|^2-K^{\mathcal{N}}(\omega^{\sharp},N)\Big)e^{-f} dV_{M}\nonumber\\
&>\int_M\sum\limits_{k=1}^n\Big(|\mathrm{II}^{\mathcal{N}}(e_k,\omega^{\sharp})|^2+|\mathrm{II}^{\mathcal{N}}(e_k,N)|^2|\omega|^2\Big)e^{-f} dV_{M}\nonumber\\
&\quad-\int_{\partial M}\Big(\mathrm{II}^{\partial \mathcal{N}}(\omega^{\sharp},\omega^{\sharp})+\mathrm{II}^{\partial \mathcal{N}}(N,N)|\omega|^2\Big)e^{-f}dV_{\partial M}.\nonumber
\end{align}
Then $$\operatorname{Index}_{f}(M)\geq \frac{2}{d(d-1)}\mathrm{dim}H^1(M,\mathbb{R}).$$
\item[(2)] Assume that for any nonzero normal $f$-harmonic $1$-form $\omega\in \mathcal{H}_{T f}^{1}(M)$,
\begin{align}
&\int_M\Big(\operatorname{Ric}_{f}^{\mathcal{N}}(\omega^{\sharp}, \omega^{\sharp})+\operatorname{Ric}_{f}^{\mathcal{N}}(N, N)|\omega|^2-K^{\mathcal{N}}(\omega^{\sharp},N)\Big)e^{-f} dV_{M}\nonumber\\
&>\int_M\sum\limits_{k=1}^n\Big(|\mathrm{II}^{\mathcal{N}}(e_k,\omega^{\sharp})|^2+|\mathrm{II}^{\mathcal{N}}(e_k,N)|^2|\omega|^2\Big)e^{-f} dV_{M}\nonumber\\
&\quad-\int_{\partial M}H_f^{\partial\mathcal{N}}|\omega|^2e^{-f}dV_{\partial M}.\nonumber
\end{align}
Then $$\operatorname{Index}_{f}(M)\geq \frac{2}{d(d-1)}\mathrm{dim}H^{n-1}(M,\mathbb{R}).$$
\end{itemize}
Here $\operatorname{Ric}_{f}^{\mathcal{N}}=\operatorname{Ric}^{\mathcal{N}}+\operatorname{Hess}^{\mathcal{N}}f$ denotes the Bakery-Emery Ricci tensor of $\mathcal{N}$; $K^{\mathcal{N}}$ denotes the sectional curvature of $\mathcal{N}$; $\mathrm{II}^{\mathcal{N}}$ denotes the second fundamental form of $\mathcal{N}^{n+1}$ in $\mathbb{R}^{d}$; $\mathrm{II}^{\partial\mathcal{N}}$ denotes the scalar second fundamental form of $\partial\mathcal{N}$ in $\mathcal{N}$ with respect to the inward unit normal vector field $\nu$;  $H_f^{\partial\mathcal{N}}=H^{\partial\mathcal{N}}+\langle\nabla f, \nu\rangle$ is the $f$-mean curvature of $\partial\mathcal{N}$ in $\mathcal{N}$; $H^{\partial\mathcal{N}}=\operatorname{tr}\mathrm{II}^{\partial\mathcal{N}}$ is the mean curvature of $\partial\mathcal{N}$ in $\mathcal{N}$; $N$ is a unit normal vector field of $M$ in $\mathcal{N}$; $\omega^{\sharp}$ is the dual vector field of $\omega$; and $\{e_1,\ldots,e_n\}$ is a local orthonormal frame on $M^{n}$.
\end{thm}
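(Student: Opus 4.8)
The plan is to transplant the test--function method of Ambrozio--Carlotto--Sharp \cite{ACS,ACS1} to the weighted free boundary setting, the new ingredient being the treatment of the two kinds of boundary conditions. First I would record the weighted index form attached to the drift Jacobi operator $L_f u=\Delta_f u+\big(|A|^2+\operatorname{Ric}_{f}^{\mathcal{N}}(N,N)\big)u$, where $\Delta_f=\Delta-\langle\nabla f,\nabla\,\cdot\,\rangle$; integrating by parts against the weight and using that $M$ meets $\partial\mathcal{N}$ orthogonally, it reads
\begin{align}
Q_f(u,u)=\int_M\Big(|\nabla u|^2-\big(|A|^2+\operatorname{Ric}_{f}^{\mathcal{N}}(N,N)\big)u^2\Big)e^{-f}dV_M-\int_{\partial M}\mathrm{II}^{\partial\mathcal{N}}(N,N)\,u^2\,e^{-f}dV_{\partial M},\nonumber
\end{align}
and $\operatorname{Index}_f(M)$ is the dimension of a maximal subspace of $H^1(M)$ on which $Q_f$ is negative definite. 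Every $H^1$ function is an admissible test function: the variation vector field $uN$ is automatically tangent to $\partial\mathcal{N}$ along $\partial M$ (since $N$ is, by orthogonality), and the Robin-type boundary condition is already encoded in $Q_f$.

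Fixing the isometric immersion $\mathcal{N}^{n+1}\hookrightarrow\mathbb{R}^{d}$ and an orthonormal basis $\{E_1,\dots,E_d\}$ of $\mathbb{R}^d$, to each $f$-harmonic $1$-form $\omega$ I would associate the $\wedge^2\mathbb{R}^d$-valued map $\omega^{\sharp}\wedge N$ and use its $\binom{d}{2}$ scalar components $u_{ab}=\langle\omega^{\sharp},E_a\rangle\langle N,E_b\rangle-\langle\omega^{\sharp},E_b\rangle\langle N,E_a\rangle$ as test functions. The core computation is to evaluate $\sum_{a<b}Q_f(u_{ab},u_{ab})$. Here I would express $\nabla u_{ab}$ through the intrinsic derivatives of $\omega^{\sharp}$ and $N$, the shape operator of $M$, and the second fundamental form $\mathrm{II}^{\mathcal{N}}$ of $\mathcal{N}$ in $\mathbb{R}^d$; summing over $a<b$ and using $\sum_a\langle\,\cdot\,,E_a\rangle\langle\,\cdot\,,E_a\rangle=\langle\,\cdot\,,\,\cdot\,\rangle$ collapses the Euclidean bookkeeping. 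The $f$-harmonicity of $\omega$ together with the weighted Bochner formula then converts the resulting gradient terms into $\operatorname{Ric}_{f}^{\mathcal{N}}(\omega^{\sharp},\omega^{\sharp})$, $\operatorname{Ric}_{f}^{\mathcal{N}}(N,N)|\omega|^2$, $-K^{\mathcal{N}}(\omega^{\sharp},N)$, and the extrinsic terms $|\mathrm{II}^{\mathcal{N}}(e_k,\omega^{\sharp})|^2$ and $|\mathrm{II}^{\mathcal{N}}(e_k,N)|^2|\omega|^2$. The outcome is that $\sum_{a<b}Q_f(u_{ab},u_{ab})$ equals the right-hand side minus the left-hand side of the stated inequality, so that the hypothesis forces $\sum_{a<b}Q_f(u_{ab},u_{ab})<0$ for every nonzero $\omega$.

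The main obstacle is the boundary analysis, which is precisely where the two cases diverge. Integrating the weighted Bochner formula by parts produces a boundary integral over $\partial M$, and along $\partial M$ the free boundary condition makes $N$ tangent to $\partial\mathcal{N}$ and identifies the conormal of $\partial M$ in $M$ with $\nu$, the inward unit normal of $\partial\mathcal{N}$. For a \emph{tangent} form $\omega\in\mathcal{H}_{N f}^{1}(M)$ the normal (conormal) part $\iota_\nu\omega$ vanishes on $\partial M$, so $\omega^{\sharp}$ is tangent to $\partial M$, hence to $\partial\mathcal{N}$; the boundary contribution then combines with the $\mathrm{II}^{\partial\mathcal{N}}(N,N)$ term of $Q_f$ to give exactly $\mathrm{II}^{\partial\mathcal{N}}(\omega^{\sharp},\omega^{\sharp})+\mathrm{II}^{\partial\mathcal{N}}(N,N)|\omega|^2$, as in part~(1). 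For a \emph{normal} form $\omega\in\mathcal{H}_{T f}^{1}(M)$ the tangential pullback of $\omega$ vanishes on $\partial M$, so $\omega^{\sharp}$ is proportional to $\nu$ there and is therefore normal to $\partial\mathcal{N}$; the boundary contributions no longer see $\mathrm{II}^{\partial\mathcal{N}}$ evaluated on $\omega^{\sharp}$ and instead trace up over the directions tangent to $\partial\mathcal{N}$ to produce the full $f$-mean curvature term $H_f^{\partial\mathcal{N}}|\omega|^2$ of part~(2). Verifying that the scalar second fundamental form assembles in exactly this way, with the correct sign conventions for $\nu$ and for the weight $e^{-f}$, is the delicate step.

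It then remains to count dimensions. Let $K=\operatorname{Index}_f(M)$ and fix an $L^2_f$-orthonormal basis $\{\phi_1,\dots,\phi_K\}$ of the negative subspace of $Q_f$. The linear map
\begin{align}
\Phi\colon\ \omega\ \longmapsto\ \Big(\textstyle\int_M u_{ab}(\omega)\,\phi_i\,e^{-f}dV_M\Big)_{1\le a<b\le d,\ 1\le i\le K}\nonumber
\end{align}
sends the relevant space of $f$-harmonic $1$-forms into $\mathbb{R}^{\binom{d}{2}K}$. If $\Phi(\omega)=0$ then every $u_{ab}(\omega)$ is $L^2_f$-orthogonal to the negative subspace, whence $Q_f(u_{ab},u_{ab})\ge0$ for all $a<b$; summing contradicts $\sum_{a<b}Q_f(u_{ab},u_{ab})<0$ unless $\omega=0$, so $\Phi$ is injective. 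Hence the dimension of the space of $f$-harmonic $1$-forms is at most $\binom{d}{2}K=\tfrac{d(d-1)}{2}\operatorname{Index}_f(M)$. Finally, the weighted Hodge theorem with boundary conditions gives $\dim\mathcal{H}_{N f}^{1}(M)=\dim H^1(M,\mathbb{R})$ for part~(1) and, via Poincar\'e--Lefschetz duality, $\dim\mathcal{H}_{T f}^{1}(M)=\dim H^1(M,\partial M;\mathbb{R})=\dim H^{n-1}(M,\mathbb{R})$ for part~(2); rearranging the two inequalities yields the stated bounds.
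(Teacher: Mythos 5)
Your proposal is correct and follows essentially the same route as the paper: the same test functions $u_{ij}=\langle N\wedge\omega^{\sharp},\theta_i\wedge\theta_j\rangle$, the same weighted Bochner--Gauss computation with the two boundary treatments (tangent forms giving the $\mathrm{II}^{\partial\mathcal{N}}$ terms, normal forms tracing up to $H_f^{\partial\mathcal{N}}$ via $H^{\partial M}+\mathrm{II}^{\partial\mathcal{N}}(N,N)=H^{\partial\mathcal{N}}$), and the same orthogonality-to-eigenfunctions dimension count, which you phrase as injectivity of $\Phi$ rather than the paper's contradiction argument. The identifications $\dim\mathcal{H}_{Nf}^1=\dim H^1(M,\mathbb{R})$ and $\dim\mathcal{H}_{Tf}^1=\dim H^{n-1}(M,\mathbb{R})$ you invoke are exactly those the paper establishes via the weighted Hodge decomposition.
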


Theorem \ref{thm-1} generalizes all the index estimates of compact hypersurfaces with or without boundary in \cite{ACS, ACS1, IR, IRS}. For example, we apply it to give the following.
\begin{thm}\label{thm-2}
 Let $\mathcal{N}^{n+1}$ be a compact domain in $\mathbb{R}^{n+1}$. Let $M^{n}$ be a free boundary $f$-minimal hypersurface of the weighted manifold $\left({\mathcal{N}^{n+1}}, g_{\mathrm{can}}, e^{-f}dV_\mathcal{N}\right)$ with $\operatorname{Hess}^{\mathcal{N}}f\geq0$.
 \begin{itemize}
\item[(1)] If $\partial\mathcal{N}$ is strictly two-convex in $\mathcal{N}$, then
$$\operatorname{Index}_{f}(M)\geq \frac{2}{n(n+1)}\mathrm{dim}H^{1}(M,\mathbb{R}).$$
\item[(2)] If $\partial\mathcal{N}$ is strictly $f$-mean convex in $\mathcal{N}$, then
$$\operatorname{Index}_{f}(M)\geq \frac{2}{n(n+1)}\mathrm{dim}H^{n-1}(M,\mathbb{R}).$$
\end{itemize}
\end{thm}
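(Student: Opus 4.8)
The plan is to deduce Theorem \ref{thm-2} directly from Theorem \ref{thm-1} by verifying, in the present flat-domain setting, the two analytic hypotheses and then reading off the conclusions with $d=n+1$. First I would record the simplifications forced by taking $\mathcal{N}^{n+1}\subset\mathbb{R}^{n+1}$ with the ambient immersion equal to the inclusion of a domain: here $d=n+1$, the domain is totally geodesic in $\mathbb{R}^{n+1}$, so $\mathrm{II}^{\mathcal{N}}\equiv0$, and flatness gives $\operatorname{Ric}^{\mathcal{N}}=0$ and $K^{\mathcal{N}}=0$. Consequently $\operatorname{Ric}_{f}^{\mathcal{N}}=\operatorname{Hess}^{\mathcal{N}}f$, and the right-hand bulk integrals $\int_M\sum_k(|\mathrm{II}^{\mathcal{N}}(e_k,\omega^{\sharp})|^2+|\mathrm{II}^{\mathcal{N}}(e_k,N)|^2|\omega|^2)e^{-f}$ vanish identically in both parts. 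Note also that the constant becomes $\frac{2}{d(d-1)}=\frac{2}{(n+1)n}=\frac{2}{n(n+1)}$, matching the stated bounds.

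After moving the boundary contribution to the left, the inequality to be checked in part (1) collapses to
\begin{align}
&\int_M \big(\operatorname{Hess}^{\mathcal{N}} f(\omega^{\sharp},\omega^{\sharp})+\operatorname{Hess}^{\mathcal{N}} f(N,N)|\omega|^2\big)e^{-f}\,dV_{M}\nonumber\\
&\quad +\int_{\partial M}\big(\mathrm{II}^{\partial\mathcal{N}}(\omega^{\sharp},\omega^{\sharp})+\mathrm{II}^{\partial\mathcal{N}}(N,N)|\omega|^2\big)e^{-f}\,dV_{\partial M}>0,\nonumber
\end{align}
while the analogous reduction in part (2) replaces the boundary integrand by $H_f^{\partial\mathcal{N}}|\omega|^2$. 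Since $\operatorname{Hess}^{\mathcal{N}}f\ge0$, the bulk integral is nonnegative in both cases, so everything reduces to controlling the boundary term and establishing strict positivity of the total.

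For the boundary term I would exploit the free boundary (orthogonality) condition together with the boundary behaviour of the harmonic field. Along $\partial M$ the unit normal $N$ of $M$ is tangent to $\partial\mathcal{N}$, because orthogonality makes the conormal of $\partial M$ in $M$ coincide with $\pm\nu$, so that $N\perp\nu$. In part (1), $\omega\in\mathcal{H}_{Nf}^{1}(M)$ has vanishing normal part on $\partial M$, hence $\omega^{\sharp}$ is tangent to $\partial M\subset\partial\mathcal{N}$ and, being tangent to $M$, is orthogonal to $N$; thus $\{N,\widehat{\omega}\}$ with $\widehat{\omega}=\omega^{\sharp}/|\omega|$ is an orthonormal pair in $T\partial\mathcal{N}$ wherever $\omega\neq0$, and the boundary integrand equals $|\omega|^2\big(\mathrm{II}^{\partial\mathcal{N}}(\widehat{\omega},\widehat{\omega})+\mathrm{II}^{\partial\mathcal{N}}(N,N)\big)$. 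By the min-max characterization of the trace of $\mathrm{II}^{\partial\mathcal{N}}$ restricted to a $2$-plane, this is at least $|\omega|^2$ times the sum of the two smallest principal curvatures of $\partial\mathcal{N}$, which is positive by strict two-convexity. In part (2), $\omega\in\mathcal{H}_{Tf}^{1}(M)$ has vanishing tangential part, so $\omega^{\sharp}$ is normal to $\partial M$, and the boundary integrand $H_f^{\partial\mathcal{N}}|\omega|^2$ is a nonnegative multiple of $|\omega|^2$ by strict $f$-mean convexity. In either case the boundary integrand is a nonnegative multiple of $|\omega|^2$, bounded below by $c\,|\omega|^2$ for some $c>0$ by compactness of $\partial M$.

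The main obstacle is upgrading ``$\ge0$'' to the strict ``$>0$'' required by Theorem \ref{thm-1}, since $\operatorname{Hess}^{\mathcal{N}}f$ is only assumed nonnegative and may be degenerate (for instance $f$ constant). The total vanishes only if both integrals vanish, and the boundary integral vanishes only if $|\omega|\equiv0$ on $\partial M$. I would rule this out by unique continuation: a nonzero $f$-harmonic field solves the first-order elliptic system $d\omega=0$, $d^{*}_{f}\omega=0$, hence $\Delta_{f}\omega=0$; if $\omega$ vanished identically on $\partial M$ then all its tangential derivatives along $\partial M$ would vanish and the first-order equations would force $\nabla_{\nu}\omega=0$ on $\partial M$ as well, so the full Cauchy data of the second-order equation $\Delta_{f}\omega=0$ would vanish along $\partial M$, whence Aronszajn's unique continuation theorem gives $\omega\equiv0$, contradicting $\omega\neq0$. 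Therefore $\omega\not\equiv0$ on $\partial M$, the boundary integral is strictly positive, and the hypotheses of Theorem \ref{thm-1} are satisfied. Applying Theorem \ref{thm-1} with $d=n+1$ then yields $\operatorname{Index}_{f}(M)\ge\frac{2}{n(n+1)}\dim H^{1}(M,\mathbb{R})$ in case (1) and $\operatorname{Index}_{f}(M)\ge\frac{2}{n(n+1)}\dim H^{n-1}(M,\mathbb{R})$ in case (2).
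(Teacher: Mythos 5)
Your proposal is correct and follows essentially the same route as the paper: apply Theorem \ref{thm-1} after observing that for a domain $\mathcal{N}^{n+1}\subset\mathbb{R}^{n+1}$ one has $d=n+1$, $\mathrm{II}^{\mathcal{N}}=0$, $K^{\mathcal{N}}=0$, and $\operatorname{Ric}_f^{\mathcal{N}}=\operatorname{Hess}^{\mathcal{N}}f\geq 0$, so that the hypotheses of Theorem \ref{thm-1} reduce to strict positivity of the bulk Hessian integral plus the boundary integral, the latter controlled by strict two-convexity (case (1), using that $\{N,\omega^{\sharp}/|\omega|\}$ is orthonormal in $T\partial\mathcal{N}$ where $\omega\neq0$) or strict $f$-mean convexity (case (2)). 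The one genuine difference is that the paper's proof is exactly this reduction and nothing more: it asserts that the convexity hypotheses make the inequality of Theorem \ref{thm-1} hold, leaving implicit the point you isolated, namely that when $\operatorname{Hess}^{\mathcal{N}}f$ is degenerate (e.g.\ $f$ constant) strictness can only come from the boundary term, which forces one to rule out a nonzero $f$-harmonic form vanishing identically on $\partial M$. Your unique continuation argument (the first-order equations $d\omega=0$, $\delta_f\omega=0$ upgrade $\omega|_{\partial M}\equiv0$ to vanishing full Cauchy data for $\Delta_f\omega=0$ along $\partial M$, hence $\omega\equiv0$) supplies precisely the justification the paper omits, so your write-up is actually more complete than the paper's; note only that this step uses that every component of $M$ has nonempty boundary (e.g.\ $M$ connected with $\partial M\neq\emptyset$), which is the standard setting here.
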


When $f=0$, Theorem \ref{thm-2} is a combination of Theorem A and Theorem F of \cite{ACS1}.

\section{Preliminary}
In this section, we prepare the notations and some useful lemmas.

By Nash's embedding theorem, any Riemannian manifold $(\mathcal{N}^{n+1}, g)$ can be isometrically embedded in a sufficiently high-dimensional Euclidean space $\mathbb{R}^{d}$. For an immersed hypersurface $M^n$ of $\mathcal{N}^{n+1}$, let $D$, $\overline{\nabla}$ and $\nabla$ denote the Levi-Civita connection of the Euclidean space, $\mathcal{N}$
and $M$ respectively. The relation between these connections is given by
$$D_{X} Y=\overline{\nabla}_X Y+\mathrm{II}^{\mathcal{N}}(X, Y),$$
$$\overline{\nabla}_X Y=\nabla_{X} Y+\mathrm{II}^{M}(X, Y),$$
where $X,Y$ are vectors fields tangent to $M^n$; $\mathrm{II}^{\mathcal{N}}$ is the second fundamental form of $\mathcal{N}^{n+1}$ in $\mathbb{R}^{d}$; and $\mathrm{II}^{M}$ is the second fundamental form of $M^n$ in $\mathcal{N}^{n+1}$.

A weighted Riemannian manifold $(\mathcal{N}^{n+1}, g,e^{-f}dV_\mathcal{N})$ is a Riemannian manifold endowed with a measure with smooth positive density $e^{-f}$ with respect to the Riemannian volume measure $dV_\mathcal{N}$. We are interested in orientable $f$-minimal hypersurfaces $M$ in $\mathcal{N}$, namely the critical points of the weighted volume functional
$$
\operatorname{Vol}_{f}(M)=\int_{M} e^{-f} d V_{M}.
$$

The first variation formula for a variation $M_{t}$ of $M$ with variation field $\xi$ is given by (see \cite[Lemma 3.2]{CR})
$$
\left.\frac{d}{d t}\operatorname{Vol}_{f}(M_{t})\right|_{t=0}=- \int_{M} u H_{f} e^{-f}d V_{M}+\int_{\partial M} g(\eta, \xi) e^{-f} dV_{\partial M}.
$$
Here $u=g(N,\xi)$; $N$ is the fixed unit normal vector field of $M$ in $\mathcal{N}$; $\eta$ is the outward unit normal vector field along $\partial M$ in $M$; and $H_{f}$ is the $f$-mean curvature of $M$ in $\mathcal{N}$:
$$H_{f} = H^M+\frac{\partial f}{\partial N},$$
where $H^M=\operatorname{tr} g(\mathrm{II}^{M},N)$ is the mean curvature of $M$ in $\mathcal{N}$. Note that $\partial M$ varies inside $\partial \mathcal{N}$ for the free boundary problem. Then $M$ is critical for the $f$-volume, called $f$-minimal with free boundary if and only if $H_{f} = 0$ identically on $M$ and $\eta=-\nu$, where $\nu$ is the inward unit normal vector field of $\partial \mathcal{N}$ in $\mathcal{N}$.

The quadratic form associated to the second variation of the $f$-volume of a free boundary $f$-minimal surface with variation field $\xi=uN$ is (see \cite[Proposition 3.5]{CR})
\begin{align}\label{eq1}
Q_{f}(u, u)=&\int_{M}\left(|\nabla u|^{2}-\left(\operatorname{Ric}_{f}^{\mathcal{N}}(N, N)+|\mathrm{II}^{M}|^{2}\right) u^{2} \right)e^{-f}d V_{M}\\
&-\int_{\partial M}\mathrm{II}^{\partial \mathcal{N}}(N, N) u^{2}e^{-f} d V_{\partial M},\nonumber
\end{align}
where $\operatorname{Ric}_{f}^{\mathcal{N}}=\operatorname{Ric}^{\mathcal{N}}+\operatorname{Hess}^{\mathcal{N}}f$ denotes the Bakery-Emery Ricci tensor of the ambient manifold  $\mathcal{N}$ (see \cite{CR}) and $\mathrm{II}^{\partial \mathcal{N}}(X, Y)=g\left(\overline{\nabla}_{X} Y, \nu\right)$ denotes the scalar second fundamental form of $\partial \mathcal{N}$ in $\mathcal{N}$ with respect to $\nu$.

 The $f$-index of $M$ is the maximal dimension of a linear subspace $V$ in $C^{\infty}(M)$ on which the quadratic form $Q_{f}$ is negative (cf. \cite{IR}). We can also write (\ref{eq1}) making use of the divergence theorem by
$$
\begin{aligned}
Q_{f}(u, u)=&\int_{M}\left( u \Delta_{f} u-\left(\operatorname{Ric}_{f}^{\mathcal{N}}(N, N)+\left|\mathrm{II}^{M}\right|^{2}\right) u^{2}\right) e^{-f}d V_{M}\\
&+\int_{\partial M} u \left(\frac{\partial u}{\partial \eta}-\mathrm{II}^{\partial \mathcal{N}}(N, N)u\right) e^{-f}dV_{\partial M},
\end{aligned}
$$
where $\Delta_{f}u=\Delta u+\langle\nabla f, \nabla u\rangle$, $\Delta u=-\operatorname{div}(\nabla u)$, and the bracket $\langle\cdot,\cdot\rangle$ always denotes the usual inner product between tensors induced by the metric.
The elliptic operator $L_{f}=\Delta_{f}-(\operatorname{Ric}_{f}^{\mathcal{N}}(N, N)+|\mathrm{II}^{M}|^{2})$ is called  the weighted Jacobi operator of $M$.

The boundary condition
$$\frac{\partial u}{\partial \eta}-\mathrm{II}^{\partial \mathcal{N}}(N, N) u=0$$
 makes the weighted Jacobi operator $L_{f}$ self-dual. Under this boundary condition, there exists a non-decreasing and diverging sequence of eigenvalues $\lambda_{1} \leq \lambda_{2} \leq \cdots \leq \lambda_{k} \nearrow \infty$,  associated to a $L^{2}(M, e^{-f} d V_{M})$-orthonormal basis $\left\{u_{k}\right\}_{k=1}^{\infty}$ of solutions to the eigenvalue problem
\begin{equation}\label{LEE}
\begin{cases}
{L}_{f} u=\lambda u, & \text { in } M, \\
\frac{\partial u}{\partial \eta}-\mathrm{II}^{\partial \mathcal{N}}(N, N) u=0, & \text { on } \partial M
.
\end{cases} 
\end{equation}

From the Courant-Hilbert variational characterization for solutions of (\ref{LEE}) (see \cite{CFP, MNG}), if $V_{k}$ denotes the subspace spanned by the first $k$ eigenfunctions for the above problem, then the next eigenvalue $\lambda_{k+1}\left(L_{f}\right)$ equals the minimum of $Q_{f}$ on the $L^{2}(M, e^{-f} d V_{M})$ orthogonal complement of $V_{k}$, i.e.,
$$
\lambda_{k+1}\left(L_{f}\right)=\inf_{u \in V_{k}^\perp \backslash\{0\}} \frac{Q_{f}(u, u)}{\int_{M} u^{2} e^{-f} d V_{M}}.
$$

Since we will use $f$-harmonic 1-forms of the $f$-minimal  hypersurfaces rather than harmonic 1-forms to construct test functions  for the quadratic form $Q_{f}$, we introduce some basic facts about $f$-harmonic 1-forms on manifolds with boundary below.

Let $\iota:\partial M\rightarrow M^n$ and $\iota^*$  denote the natural inclusion and its pull-back map. A $p$-form $\omega\in\Omega^p(M)$ is called $f$-harmonic if $d\omega=0$ and $\delta_f\omega=0$, where $\delta_{f}\omega=\delta \omega +i_{\nabla f}\omega$; $i_{X}$ is the contraction operator from the left hand by $X$; $\delta=(-1)^{n(p+1)+1}\ast d \ast$ is the codifferential operator; and $\ast$  is the Hodge operator with respect to the metric on $M$ (see \cite{IRS}). We denote the spaces of $f$-harmonic $p$-forms tangent and normal at the boundary respectively by
$$
\mathcal{H}_{N f}^{p}(M)=\{\omega \in  \Omega^{p}(M) \mid d \omega=0, \delta_{f} \omega=0~\textit{on} ~M ~\textit{and} ~i_{\eta}\omega=0 ~\textit{on} ~\partial M \},
$$
$$
\mathcal{H}_{Tf}^{p}(M)=\{ \omega \in  \Omega^{p}(M) \mid d\omega=0, \delta_{f}\omega=0~\textit{on} ~M ~\textit{and} ~\iota^*\omega=0 ~\textit{on} ~\partial M \}.
$$
For $f\equiv0$, they are denoted as $\mathcal{H}_{N}^{p}(M)$ and $\mathcal{H}_{T}^{p}(M)$ respectively in \cite{CG}. By the Hodge decomposition, we know  $\mathcal{H}_{N}^{p}(M)\cong H^p(M,\mathbb{R})\cong H_{n-p}(M,\partial M,\mathbb{R})$ and $\mathcal{H}_{N}^{p}(M)\cong\mathcal{H}_{T}^{n-p}(M)$ which still hold in the weighted case (see \cite{Bueler, Yano}). Hence, the dimension of $\mathcal{H}_{N f}^{1}(M)$ equals the first Betti number $b_1(M):=\mathrm{dim}H^1(M,\mathbb{R})$. In fact, the isomorphism $\mathcal{H}_{N}^{1}(M)\cong\mathcal{H}_{N f}^{1}(M)$ (resp. $\mathcal{H}_{T}^{1}(M)\cong\mathcal{H}_{T f}^{1}(M)$) follows directly by setting $\widetilde{\omega}=\omega+du$ for $\omega\in \mathcal{H}_{N}^{1}(M)$ (resp. $\omega\in \mathcal{H}_{T}^{1}(M)$), where $u\in C^{\infty}(M)$ is a solution of the equation
 \begin{equation*}
 \left\{
 \begin{array}{ll}
 \Delta_fu=-i_{\nabla f} \omega & \textit{on~~} M,\\
 \frac{\partial u}{\partial\eta}=0 ~(\textit{resp.~}  \frac{\partial u}{\partial\eta}=-i_{\eta}\omega, u=0) & \textit{on~~} \partial M.
 \end{array}
 \right.
 \end{equation*}
This equation is solvable because
\begin{eqnarray*}
\int_M -i_{\nabla f} \omega e^{-f}dV_M=\int_M-\delta(\omega e^{-f})dV_M=\int_{\partial M} i_\eta(\omega e^{-f})dV_{\partial M}=0.
\end{eqnarray*}
The definition of tangent and normal $f$-harmonic forms for compact manifolds with nonempty boundary also makes the action of the weighted Laplacian operator $\Delta_f=d\delta_f+\delta_fd$ self-dual by the following lemma.
\begin{lem}\label{lem-stokes}
For any two $p$-forms $\omega_1,\omega_2\in\Omega^p(M)$, we have
\begin{equation}\label{eq-stokes}
\begin{array}{ll}
\displaystyle\int_M\Big(\langle \Delta_f \omega_1, \omega_2\rangle-\langle d\omega_1, d\omega_2\rangle-\langle \delta_f \omega_1, \delta_f \omega_2\rangle\Big)e^{-f}dV_M&\\
= -\displaystyle\int_{\partial M}\Big(\langle i_{\eta}d\omega_1,\iota^*\omega_2\rangle-\langle i_{\eta}\omega_2, \iota^*\delta_f \omega_1\rangle\Big)e^{-f}dV_{\partial M}.&
\end{array}
\end{equation}
In particular, a $p$-form $\omega\in\Omega^p(M)$ is tangent $f$-harmonic if and only if $\Delta_f\omega=0$ on $M$, $i_{\eta}d\omega=0$ and $i_{\eta}\omega=0$ on $\partial M$; $\omega\in\Omega^p(M)$ is normal $f$-harmonic if and only if $\Delta_f\omega=0$ on $M$, $\iota^*\omega=0$ and $\iota^*\delta_f\omega=0$ on $\partial M$.
\end{lem}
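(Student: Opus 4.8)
The plan is to deduce the second-order identity \eqref{eq-stokes} from a single first-order weighted Green identity, applied once to each summand of $\Delta_f=d\delta_f+\delta_f d$, and then to obtain the characterization of tangent and normal $f$-harmonic forms by specializing to $\omega_1=\omega_2$.

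First I would record the basic integration-by-parts rule that makes $\delta_f$ the weighted adjoint of $d$: for every $\alpha\in\Omega^{p-1}(M)$ and $\beta\in\Omega^{p}(M)$,
\begin{equation*}
\int_M\big(\langle d\alpha,\beta\rangle-\langle\alpha,\delta_f\beta\rangle\big)e^{-f}\,dV_M=\int_{\partial M}\langle\iota^*\alpha,i_\eta\beta\rangle\,e^{-f}\,dV_{\partial M}.
\end{equation*}
To prove this I would start from the classical (unweighted) Green formula for $d$ and $\delta$, applied not to $\beta$ but to $e^{-f}\beta$, and use the pointwise product rule $\delta(e^{-f}\beta)=e^{-f}\delta\beta+i_{\nabla(e^{-f})}\beta=e^{-f}(\delta\beta+i_{\nabla f}\beta)=e^{-f}\delta_f\beta$; since $e^{-f}$ is a positive scalar it commutes with the interior pairing and with both $\iota^*$ and $i_\eta$, so it simply reappears as the weight on each integral. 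Here $\eta$ is the outward unit normal of $\partial M$ in $M$, matching the orientation convention fixed earlier in this section.

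With this identity in hand, I would apply it twice. Taking $(\alpha,\beta)=(\delta_f\omega_1,\omega_2)$ treats the term $\langle d\delta_f\omega_1,\omega_2\rangle$ and produces $\int_M\langle\delta_f\omega_1,\delta_f\omega_2\rangle e^{-f}\,dV_M$ plus the boundary integral $\int_{\partial M}\langle\iota^*\delta_f\omega_1,i_\eta\omega_2\rangle e^{-f}\,dV_{\partial M}$. Taking instead $(\alpha,\beta)=(\omega_2,d\omega_1)$, with $d\omega_1\in\Omega^{p+1}(M)$, and reading the identity from right to left treats the term $\langle\delta_f d\omega_1,\omega_2\rangle$ and produces $\int_M\langle d\omega_1,d\omega_2\rangle e^{-f}\,dV_M$ minus the boundary integral $\int_{\partial M}\langle i_\eta d\omega_1,\iota^*\omega_2\rangle e^{-f}\,dV_{\partial M}$. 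Adding the two and recalling $\Delta_f=d\delta_f+\delta_f d$, the interior terms assemble into the left-hand side of \eqref{eq-stokes} and the two boundary integrals into its right-hand side. The only real care needed is the degree and sign bookkeeping in these two applications, together with the verification of the product rule for $\delta$ that confirms $\delta_f$ is exactly the weighted adjoint; everything else is formal.

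Finally I would obtain the characterization by setting $\omega_1=\omega_2=\omega$ in \eqref{eq-stokes}, which gives
\begin{equation*}
\int_M\big(\langle\Delta_f\omega,\omega\rangle-|d\omega|^2-|\delta_f\omega|^2\big)e^{-f}\,dV_M=-\int_{\partial M}\big(\langle i_\eta d\omega,\iota^*\omega\rangle-\langle i_\eta\omega,\iota^*\delta_f\omega\rangle\big)e^{-f}\,dV_{\partial M}.
\end{equation*}
The forward implications are immediate: $d\omega=0$ and $\delta_f\omega=0$ force $\Delta_f\omega=0$, while $i_\eta d\omega=0$ follows from $d\omega=0$ (tangent case) and $\iota^*\delta_f\omega=0$ follows from $\delta_f\omega=0$ (normal case). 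For the converse, the tangent hypotheses $\Delta_f\omega=0$, $i_\eta d\omega=0$, $i_\eta\omega=0$ make both integrands on the right vanish and kill $\langle\Delta_f\omega,\omega\rangle$ on the left, so $\int_M(|d\omega|^2+|\delta_f\omega|^2)e^{-f}\,dV_M=0$, whence $d\omega=\delta_f\omega=0$; together with $i_\eta\omega=0$ this is precisely tangent $f$-harmonicity. The normal case is identical, the hypotheses $\iota^*\omega=0$ and $\iota^*\delta_f\omega=0$ being exactly what is needed to annihilate the two boundary terms.
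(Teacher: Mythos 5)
Your proof is correct and takes essentially the same route as the paper: both decompose $\Delta_f=d\delta_f+\delta_f d$ and integrate by parts once on each summand, the paper doing this via Yano's pointwise identity $\langle\delta\alpha,\beta\rangle-\langle\alpha,d\beta\rangle=\delta(\alpha\llcorner\beta)$ combined with Stokes' theorem for $1$-forms, while you use the equivalent integrated weighted Green identity obtained by applying the unweighted formula to $e^{-f}\beta$ together with the product rule $\delta(e^{-f}\beta)=e^{-f}\delta_f\beta$. Your handling of the ``in particular'' characterization (setting $\omega_1=\omega_2$ and using nonnegativity of $|d\omega|^2+|\delta_f\omega|^2$) is also correct, and makes explicit a step the paper leaves implicit.
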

\begin{proof}
Using the notations of \cite{Yano}, let $\llcorner$ denote the contraction of tensors from the right hand. Then we have
$$\langle\delta\alpha,\beta\rangle-\langle\alpha,d\beta\rangle=\delta(\alpha\llcorner\beta), \quad \textit{for~}\alpha\in\Omega^{p+1}(M), \beta\in\Omega^p(M),$$
and thus
$$\langle\delta_f\alpha,\beta\rangle-\langle\alpha,d\beta\rangle=\langle i_{\nabla f}\alpha,\beta\rangle+\delta(\alpha\llcorner\beta), \quad \textit{for~}\alpha\in\Omega^{p+1}(M), \beta\in\Omega^p(M).$$
It follows that
\begin{eqnarray*}
&\langle\delta_fd\omega_1,\omega_2\rangle e^{-f}-\langle d\omega_1,d\omega_2\rangle e^{-f}=\delta(d\omega_1\llcorner \omega_2 e^{-f}),\\
&\langle d\delta_f\omega_1,\omega_2\rangle e^{-f}-\langle \delta_f\omega_1,\delta_f\omega_2\rangle e^{-f}=-\delta(\omega_2 e^{-f}\llcorner \delta_f\omega_1).
\end{eqnarray*}
Using the Stokes' theorem $$\int_M\delta \theta dV_M=-\int_{\partial M}i_{\eta}\theta dV_{\partial M}, \quad \textit{for~}\theta\in\Omega^1(M),$$
we obtain (\ref{eq-stokes}) by taking sum and integration of the above two formulae.
\end{proof}

In this paper, we use the usual musical isomorphism to pass from 1-forms to vectors, i.e., for a 1-form $\omega\in\Omega^1(M)$, $\omega^{\sharp}$ is the unique vector field on $M$ such that $\omega(Y)=\langle\omega^{\sharp}, Y\rangle$  for all vector fields $Y$.
\begin{lem}\label{lem-nablanorm}
Let $M^n$ be a free boundary $f$-minimal hypersurface of $\mathcal{N}^{n+1}$.
\begin{itemize}
\item[(1)] Let $\omega\in \mathcal{H}_{N f}^{1}(M)$ be a tangent $f$-harmonic $1$-form. Then
\begin{eqnarray*}
\int_M|\nabla\omega|^2e^{-f}dV_M=&-\displaystyle\int_M\left(\operatorname{Ric}_{f}^{\mathcal{N}}(\omega^{\sharp},\omega^{\sharp})-K^{\mathcal{N}}(\omega^{\sharp},N)-|\mathrm{II}^{M}(\cdot,\omega^{\sharp})|^2\right)e^{-f}dV_M\\
&-\displaystyle\int_{\partial M}\mathrm{II}^{\partial \mathcal{N}}(\omega^{\sharp},\omega^{\sharp})e^{-f}dV_{\partial M}.
\end{eqnarray*}
\item[(2)] Let $\omega\in \mathcal{H}_{T f}^{1}(M)$ be a normal $f$-harmonic $1$-form. Then
\begin{eqnarray*}
\displaystyle\int_M|\nabla\omega|^2e^{-f}dV_M=&-\displaystyle\int_M\Big(\operatorname{Ric}_{f}^{\mathcal{N}}(\omega^{\sharp},\omega^{\sharp})-K^{\mathcal{N}}(\omega^{\sharp},N)-|\mathrm{II}^{M}(\cdot,\omega^{\sharp})|^2\Big)e^{-f}dV_M\\
&-\displaystyle\int_{\partial M}(H^{\partial M}+\langle\nabla f, \nu\rangle)|\omega|^2e^{-f}dV_{\partial M}.
\end{eqnarray*}
\end{itemize}
\end{lem}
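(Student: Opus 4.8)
The plan is to derive both identities from the weighted Bochner formula applied to an $f$-harmonic $1$-form, converting the Laplacian term into a boundary integral via Lemma~\ref{lem-stokes} and controlling the curvature terms of $M$ by those of the ambient $\mathcal{N}$ and the immersion $\mathcal{N}\hookrightarrow\mathbb{R}^d$. First I would record the weighted Bochner (Weitzenb\"ock) identity for $1$-forms,
\begin{equation*}
\tfrac{1}{2}\Delta_f|\omega|^2 = \langle\Delta_f\omega,\omega\rangle - |\nabla\omega|^2 - \operatorname{Ric}_f^{M}(\omega^{\sharp},\omega^{\sharp}),
\end{equation*}
where $\operatorname{Ric}_f^{M}=\operatorname{Ric}^{M}+\operatorname{Hess}^{M}f$ is the intrinsic Bakry--\'Emery Ricci tensor of $M$. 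Since $\omega$ is $f$-harmonic, $\Delta_f\omega=0$ by Lemma~\ref{lem-stokes}, so integrating against $e^{-f}$ and applying the weighted Stokes' theorem to the left-hand side yields
\begin{equation*}
\int_M|\nabla\omega|^2 e^{-f}dV_M = -\int_M\operatorname{Ric}_f^{M}(\omega^{\sharp},\omega^{\sharp})e^{-f}dV_M + \tfrac{1}{2}\int_{\partial M} i_\eta(d|\omega|^2)\,e^{-f}dV_{\partial M}.
\end{equation*}

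The next step is to replace the intrinsic Ricci curvature of $M$ by ambient quantities. Here I would invoke the Gauss equation for the hypersurface $M^n\subset\mathcal{N}^{n+1}$: tracing it and using $H_f=0$ (so that the mean-curvature contribution is governed entirely by $|\mathrm{II}^M|^2$ after incorporating the $\operatorname{Hess}f$ term) gives
\begin{equation*}
\operatorname{Ric}_f^{M}(\omega^{\sharp},\omega^{\sharp}) = \operatorname{Ric}_f^{\mathcal{N}}(\omega^{\sharp},\omega^{\sharp}) - K^{\mathcal{N}}(\omega^{\sharp},N) - |\mathrm{II}^{M}(\cdot,\omega^{\sharp})|^2,
\end{equation*}
where the sectional-curvature term $K^{\mathcal{N}}(\omega^{\sharp},N)$ accounts for the missing normal direction $N$ when passing from the full ambient Ricci trace to the intrinsic one, and the $|\mathrm{II}^M(\cdot,\omega^{\sharp})|^2$ term comes from the Gauss equation quadratic in the second fundamental form (the $f$-minimality $H_f=0$ kills the linear cross term). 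Substituting this into the integrated Bochner identity produces precisely the interior integrands appearing in both parts of the lemma.

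It remains to evaluate the boundary integral $\tfrac{1}{2}\int_{\partial M} i_\eta(d|\omega|^2)e^{-f}dV_{\partial M} = \int_{\partial M}\langle\nabla_\eta\omega^{\sharp},\omega^{\sharp}\rangle e^{-f}dV_{\partial M}$, and this is where the two cases diverge and where I expect the main difficulty to lie. Along $\partial M$ one splits $\omega^{\sharp}$ into its tangential and $\eta$-components and uses the free boundary condition $\eta=-\nu$ together with the shape operator of $\partial M$ in $M$. For case~(1), the tangency condition $i_\eta\omega=0$ forces $\omega^{\sharp}$ to be tangent to $\partial M$, and a computation relating $\nabla_\eta\omega^{\sharp}$ to the second fundamental form of $\partial\mathcal{N}$ (via $\eta=-\nu$ and the fact that $\partial M\subset\partial\mathcal{N}$ meets orthogonally) converts the boundary term into $-\int_{\partial M}\mathrm{II}^{\partial\mathcal{N}}(\omega^{\sharp},\omega^{\sharp})e^{-f}dV_{\partial M}$. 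For case~(2), the condition $\iota^*\omega=0$ means $\omega^{\sharp}$ is normal to $\partial M$, i.e.\ proportional to $\eta$; a parallel but distinct boundary computation then introduces the $f$-mean curvature $H^{\partial M}+\langle\nabla f,\nu\rangle$ of the boundary. The delicate points are tracking signs through the identification $\eta=-\nu$ and correctly distinguishing the second fundamental form of $\partial\mathcal{N}$ in $\mathcal{N}$ from that of $\partial M$ in $M$; I would handle these by adapting the boundary integration-by-parts identities in Lemma~\ref{lem-stokes} to the specific $f$-harmonic boundary conditions defining $\mathcal{H}_{Nf}^1(M)$ and $\mathcal{H}_{Tf}^1(M)$.
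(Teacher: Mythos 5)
Your interior analysis coincides exactly with the paper's: the weighted Bochner formula, integration against $e^{-f}dV_M$ producing the boundary term $\int_{\partial M}\langle\nabla_{\eta}\omega^{\sharp},\omega^{\sharp}\rangle e^{-f}dV_{\partial M}$, and the traced Gauss equation combined with $\operatorname{Hess}^{M}f=\operatorname{Hess}^{\mathcal{N}}f+\langle\nabla f,\mathrm{II}^{M}(\cdot,\cdot)\rangle$, in which $f$-minimality $H_f=0$ kills the cross term $H_f\langle A\omega^{\sharp},\omega^{\sharp}\rangle$. All of that is correct and is precisely how the paper proceeds.

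The gap is in the boundary evaluation, which you rightly flag as the crux but then only assert. The integrand $\langle\nabla_{\eta}\omega^{\sharp},\omega^{\sharp}\rangle=\tfrac12\,\eta(|\omega|^2)$ is a \emph{normal derivative} of $\omega$ along $\partial M$, so it cannot be determined by the data you list — the boundary conditions $i_{\eta}\omega=0$ (resp. $\iota^*\omega=0$), the identity $\eta=-\nu$, and the various shape operators; one must use the interior equations of $\omega$ \emph{restricted to} $\partial M$, and they enter differently in the two cases. In case (1) the paper uses closedness: $0=d\omega(\eta,\omega^{\sharp})=\langle\nabla_{\eta}\omega^{\sharp},\omega^{\sharp}\rangle-\langle\nabla_{\omega^{\sharp}}\omega^{\sharp},\eta\rangle$ swaps the normal derivative for a tangential one, and then, since $\mathrm{II}^{M}(\omega^{\sharp},\omega^{\sharp})$ is parallel to $N$ and hence orthogonal to $\nu$, one gets $\langle\nabla_{\omega^{\sharp}}\omega^{\sharp},\eta\rangle=-\langle\overline{\nabla}_{\omega^{\sharp}}\omega^{\sharp},\nu\rangle=-\mathrm{II}^{\partial\mathcal{N}}(\omega^{\sharp},\omega^{\sharp})$. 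You never invoke $d\omega=0$ at the boundary, and without this swap the identification you claim cannot be carried out. In case (2) the paper evaluates $0=\delta_f\omega$ pointwise along $\partial M$ in a frame adapted to $\partial M$, using $\omega^{\sharp}=\lambda\eta$, to obtain $\langle\nabla_{\eta}\omega^{\sharp},\eta\rangle=-\lambda H^{\partial M}+\lambda\langle\nabla f,\eta\rangle$, whence $-\langle\nabla_{\eta}\omega^{\sharp},\omega^{\sharp}\rangle=(H^{\partial M}+\langle\nabla f,\nu\rangle)|\omega|^2$; again you assert the outcome without identifying this mechanism. Finally, your plan to ``adapt the integration-by-parts identities of Lemma \ref{lem-stokes}'' points at the wrong tool: both boundary computations are pointwise identities along $\partial M$ coming from $d\omega=0$ and $\delta_f\omega=0$, not consequences of a further integration by parts.
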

\begin{proof}
(1)
Since $\omega$  is $f$-harmonic, we have the $f$-Bochner-Weitzenbock formula (cf. \cite{KG})
\begin{align}\label{eq-fBochner}
-\Delta_{f} \frac{|\omega|^{2}}{2}=|\nabla \omega|^{2}+\operatorname{Ric}_{f}^{M}(\omega^{\sharp}, \omega^{\sharp}).
\end{align}
Computing the exterior derivative along $\partial{M}$, we get
$$
d \omega(\eta, \omega^{\sharp})=\langle\nabla_{\eta} \omega^{\sharp}, \omega^{\sharp}\rangle-\langle\nabla_{\omega^{\sharp}} \omega^{\sharp}, \eta\rangle.
$$
Since $d\omega=0$, we have
$$\langle\nabla_{\eta} \omega^{\sharp}, \omega^{\sharp}\rangle=\langle\nabla_{\omega^{\sharp}} \omega^{\sharp}, \eta\rangle.$$
Since $\omega$ is tangent to the boundary, $\omega^{\sharp}$ is a tangent vector field on $\partial M\subset\partial\mathcal{N}$. Then it follows from the free boundary property and Lemma \ref{lem-stokes} that
\begin{eqnarray*}
&\displaystyle\int_{M} \Big(\Delta_{f}\frac{|\omega|^{2}}{2}\Big)e^{-f} dV_{M} =-\displaystyle\int_{\partial M} \Big(i_{\eta}d\frac{|\omega^{\sharp}|^{2}}{2}\Big) e^{-f}dV_{\partial M}=- \displaystyle\int_{\partial M}\langle\nabla_{\eta}\omega^{\sharp}, \omega^{\sharp}\rangle e^{-f}dV_{\partial M} \\
&=- \displaystyle\int_{\partial M}\langle\nabla_{\omega^{\sharp}} \omega^{\sharp}, \eta\rangle e^{-f}dV_{\partial M}
= \displaystyle\int_{\partial M}\langle\overline{\nabla}_{\omega^{\sharp}} \omega^{\sharp}, \nu\rangle e^{-f}dV_{\partial M}
=\displaystyle \int_{\partial M}\mathrm{II}^{\partial \mathcal{N}}(\omega^{\sharp}, \omega^{\sharp})e^{-f}d   V_{\partial M}.
\end{eqnarray*}
Integrating (\ref{eq-fBochner}) we get
\begin{equation}\label{eq-int-nabla}
\int_M\Big(|\nabla \omega|^{2}+\operatorname{Ric}_{f}^{M}(\omega^{\sharp}, \omega^{\sharp})\Big)e^{-f}dV_M= -\int_{\partial M}\mathrm{II}^{\partial \mathcal{N}}(\omega^{\sharp}, \omega^{\sharp})e^{-f}d   V_{\partial M}.
\end{equation}
The Gauss equation for $f$-minimal hypersurfaces implies
\begin{equation}\label{eq-gauss}
\operatorname{Ric}_{f}^{M}(\omega^{\sharp}, \omega^{\sharp})=\operatorname{Ric}_{f}^{\mathcal{N}}(\omega^{\sharp},\omega^{\sharp})-K^{\mathcal{N}}(\omega^{\sharp},N)-|\mathrm{II}^{M}(\cdot,\omega^{\sharp})|^2.
\end{equation}
Substituting (\ref{eq-gauss}) in (\ref{eq-int-nabla}) we obtain the required formula.

(2) Let $\{e_k\}_{k=1}^{n-1}$ be a local orthonormal frame of $\partial M$. Since $\omega$  is normal $f$-harmonic, $\delta_f\omega=0$ and $\omega^\sharp=\lambda\eta$ for some function $\lambda$ on $\partial M$. Then on $\partial M$,
$$0=\delta_f\omega=-\sum_{k=1}^{n-1}\langle\nabla_{e_k}\omega^{\sharp}, e_k\rangle-\langle\nabla_{\eta}\omega^{\sharp}, \eta\rangle+\langle\nabla f, \omega^\sharp\rangle=-\lambda H^{\partial M}-\langle\nabla_{\eta}\omega^{\sharp}, \eta\rangle+\lambda\langle\nabla f, \eta\rangle,$$
where $H^{\partial M}=\operatorname{tr} \langle\mathrm{II}^{\partial M},-\eta\rangle$ is the mean curvature of $\partial M$ in $M$.

Thus, since $\eta=-\nu$ on $\partial M$, we have
$$\int_{M} \Big(\Delta_{f}\frac{|\omega|^{2}}{2}\Big)e^{-f} dV_{M} =-\int_{\partial M}\langle\nabla_{\eta}\omega^{\sharp}, \omega^{\sharp}\rangle e^{-f}dV_{\partial M}=\int_{\partial M}(H^{\partial M}+\langle\nabla f, \nu\rangle)|\omega|^2e^{-f}dV_{\partial M}.$$
Then the required formula follows from (\ref{eq-fBochner}) and (\ref{eq-gauss}).
\end{proof}

To estimate the index, we use the coordinates of $N\wedge \omega^{\sharp}$ for $f$-harmonic 1-forms $\omega$ as the test functions.

\begin{lem}\label{lem-testindex}
Let $M^{n}$ be a free boundary $f$-minimal hypersurface of a weighted manifold $\left({\mathcal{N}^{n+1}}, g, e^{-f}dV_\mathcal{N}\right)$. Let $\mathcal{N}^{n+1}$  be isometrically immersed in some  Euclidean space $\mathbb{R}^{d}$. For a $f$-harmonic $1$-form $\omega$ of $M$, let
$$
u_{ij}=\langle N\wedge \omega^{\sharp}, \theta_{i}\wedge\theta_j\rangle, \quad 1\leq i<j\leq d,
$$
be the coordinates of $N\wedge \omega^{\sharp}$ under a fixed orthonormal basis $\left\{\theta_{i}\wedge\theta_j\mid 1\leq i<j\leq d\right\}$ of $\Omega^2(\mathbb{R}^{d})$.
Let $\{e_1,\cdots,e_n\}$ be a local orthonormal frame of $M^n$.
\begin{itemize}
\item[(1)]  Let $\omega\in \mathcal{H}_{N f}^{1}(M)$ be a tangent $f$-harmonic $1$-form. Then
\begin{align}
\sum\limits_{1\leq i<j\leq d}Q_f(u_{ij},u_{ij})=&\int_M\sum\limits_{k=1}^n\Big(|\mathrm{II}^{\mathcal{N}}(e_k,\omega^{\sharp})|^2+|\mathrm{II}^{\mathcal{N}}(e_k,N)|^2|\omega|^2\Big)e^{-f} dV_{M}\nonumber\\
&-\int_M\Big(\operatorname{Ric}_{f}^{\mathcal{N}}(\omega^{\sharp}, \omega^{\sharp})+\operatorname{Ric}_{f}^{\mathcal{N}}(N, N)|\omega|^2-K^{\mathcal{N}}(\omega^{\sharp},N)\Big)e^{-f} dV_{M}\nonumber\\
&-\int_{\partial M}\Big(\mathrm{II}^{\partial \mathcal{N}}(\omega^{\sharp},\omega^{\sharp})+\mathrm{II}^{\partial \mathcal{N}}(N,N)|\omega|^2\Big)e^{-f}dV_{\partial M}.\nonumber
\end{align}
\item[(2)] Let $\omega\in \mathcal{H}_{T f}^{1}(M)$ be a normal $f$-harmonic $1$-form. Then
\begin{align}
\sum\limits_{1\leq i<j\leq d}Q_f(u_{ij},u_{ij})=&\int_M\sum\limits_{k=1}^n\Big(|\mathrm{II}^{\mathcal{N}}(e_k,\omega^{\sharp})|^2+|\mathrm{II}^{\mathcal{N}}(e_k,N)|^2|\omega|^2\Big)e^{-f} dV_{M}\nonumber\\
&-\int_M\Big(\operatorname{Ric}_{f}^{\mathcal{N}}(\omega^{\sharp}, \omega^{\sharp})+\operatorname{Ric}_{f}^{\mathcal{N}}(N, N)|\omega|^2-K^{\mathcal{N}}(\omega^{\sharp},N)\Big)e^{-f} dV_{M}\nonumber\\
&-\int_{\partial M}(H^{\partial\mathcal{N}}+\langle\nabla f, \nu\rangle)|\omega|^2e^{-f}dV_{\partial M}.\nonumber
\end{align}
\end{itemize}
\end{lem}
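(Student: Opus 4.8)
The plan is to prove both parts by the same computation, the only difference entering through the boundary term supplied by Lemma \ref{lem-nablanorm}. Since $\{\theta_i\wedge\theta_j\}$ is a fixed orthonormal basis of $\Omega^2(\mathbb{R}^d)$ and is therefore parallel for the Euclidean connection $D$, for each $e_k$ one has $e_k(u_{ij})=\langle D_{e_k}(N\wedge\omega^\sharp),\theta_i\wedge\theta_j\rangle$, and hence
\[
\sum_{1\le i<j\le d}u_{ij}^2=|N\wedge\omega^\sharp|^2,\qquad \sum_{1\le i<j\le d}|\nabla u_{ij}|^2=\sum_{k=1}^n|D_{e_k}(N\wedge\omega^\sharp)|^2 .
\]
As $N\perp\omega^\sharp$ and $|N|=1$, the first sum equals $|\omega|^2$. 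Summing the second variation form (\ref{eq1}) over all pairs $i<j$ thus reduces the lemma to evaluating the Euclidean quantity $\sum_k|D_{e_k}(N\wedge\omega^\sharp)|^2$ and then rewriting $\int_M|\nabla\omega|^2e^{-f}dV_M$ by means of Lemma \ref{lem-nablanorm}.

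First I would expand, using $D_{e_k}(N\wedge\omega^\sharp)=(D_{e_k}N)\wedge\omega^\sharp+N\wedge(D_{e_k}\omega^\sharp)$ together with the two splittings $D=\overline{\nabla}+\mathrm{II}^{\mathcal{N}}$ and $\overline{\nabla}=\nabla+\mathrm{II}^M$ recorded earlier. Because $M$ has codimension one in $\mathcal{N}$, $\overline{\nabla}_{e_k}N$ is tangent to $M$ (the shape operator) and $\mathrm{II}^M(e_k,\omega^\sharp)$ is a multiple of $N$, so after discarding the vanishing term $N\wedge N$ the derivative splits into the four simple bivectors
\[
-\sum_j\langle\mathrm{II}^M(e_k,e_j),N\rangle\,e_j\wedge\omega^\sharp,\quad \mathrm{II}^{\mathcal{N}}(e_k,N)\wedge\omega^\sharp,\quad N\wedge\nabla_{e_k}\omega^\sharp,\quad N\wedge\mathrm{II}^{\mathcal{N}}(e_k,\omega^\sharp).
\]
These lie in the pairwise orthogonal summands $\Lambda^2TM$, $\mathcal{V}\wedge TM$, $N\wedge TM$ and $N\wedge\mathcal{V}$ of $\Lambda^2\mathbb{R}^d$ determined by the orthogonal decomposition $\mathbb{R}^d=TM\oplus\mathbb{R}N\oplus\mathcal{V}$, where $\mathcal{V}$ is the normal bundle of $\mathcal{N}$ in $\mathbb{R}^d$.

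The key technical point, and the step requiring the most care, is that this orthogonality forces every cross term in $|D_{e_k}(N\wedge\omega^\sharp)|^2$ to vanish once one expands via $\langle a\wedge b,c\wedge d\rangle=\langle a,c\rangle\langle b,d\rangle-\langle a,d\rangle\langle b,c\rangle$. Collecting the four diagonal norms and summing over $k$ (using $\sum_{k,j}\langle\mathrm{II}^M(e_k,e_j),N\rangle^2=|\mathrm{II}^M|^2$ and $\sum_k\langle\mathrm{II}^M(e_k,\omega^\sharp),N\rangle^2=|\mathrm{II}^M(\cdot,\omega^\sharp)|^2$) I expect to obtain
\begin{align*}
\sum_{k=1}^n|D_{e_k}(N\wedge\omega^\sharp)|^2&=|\nabla\omega|^2+|\mathrm{II}^M|^2|\omega|^2-|\mathrm{II}^M(\cdot,\omega^\sharp)|^2\\
&\quad+\sum_{k=1}^n|\mathrm{II}^{\mathcal{N}}(e_k,N)|^2|\omega|^2+\sum_{k=1}^n|\mathrm{II}^{\mathcal{N}}(e_k,\omega^\sharp)|^2 .
\end{align*}

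Finally I would substitute this into the summed form of (\ref{eq1}). The term $|\mathrm{II}^M|^2|\omega|^2$ cancels the potential $-|\mathrm{II}^M|^2|\omega|^2$, and replacing $\int_M|\nabla\omega|^2e^{-f}dV_M$ through Lemma \ref{lem-nablanorm} cancels the two occurrences of $|\mathrm{II}^M(\cdot,\omega^\sharp)|^2$ and assembles the curvature integrand $\operatorname{Ric}_f^{\mathcal{N}}(\omega^\sharp,\omega^\sharp)+\operatorname{Ric}_f^{\mathcal{N}}(N,N)|\omega|^2-K^{\mathcal{N}}(\omega^\sharp,N)$. For part (1) the boundary contribution $\mathrm{II}^{\partial\mathcal{N}}(\omega^\sharp,\omega^\sharp)$ from Lemma \ref{lem-nablanorm}(1) and the boundary term $\mathrm{II}^{\partial\mathcal{N}}(N,N)|\omega|^2$ from (\ref{eq1}) combine directly into the stated formula. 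For part (2) the two boundary terms are $(H^{\partial M}+\langle\nabla f,\nu\rangle)|\omega|^2$ and $\mathrm{II}^{\partial\mathcal{N}}(N,N)|\omega|^2$; here I would use the free boundary condition $\eta=-\nu$, which forces $N\in T\partial\mathcal{N}$ and hence the orthogonal splitting $T\partial\mathcal{N}=T\partial M\oplus\mathbb{R}N$, to deduce the identity $H^{\partial M}+\mathrm{II}^{\partial\mathcal{N}}(N,N)=H^{\partial\mathcal{N}}$ and thereby recover $(H^{\partial\mathcal{N}}+\langle\nabla f,\nu\rangle)|\omega|^2$.
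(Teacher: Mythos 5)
Your proposal is correct and follows essentially the same route as the paper's proof: summing the quadratic form (\ref{eq1}) over the test functions $u_{ij}$, expanding $\sum_k|D_{e_k}(N\wedge\omega^\sharp)|^2$ via the connection decompositions into the same four mutually orthogonal bivector terms, substituting $\int_M|\nabla\omega|^2e^{-f}dV_M$ by Lemma \ref{lem-nablanorm}, and invoking the free boundary identity $H^{\partial M}+\mathrm{II}^{\partial\mathcal{N}}(N,N)=H^{\partial\mathcal{N}}$ for part (2). The only cosmetic difference is that you justify the vanishing of cross terms by naming the orthogonal splitting $\mathbb{R}^d=TM\oplus\mathbb{R}N\oplus\mathcal{V}$ explicitly, which the paper leaves implicit in its computation.
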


\begin{proof}
As $N$ is a unit normal vector field of $M^n$ in $\mathcal{N}^{n+1}$ and $\omega^{\sharp}$ is a tangent vector field of $M^n$, we have $$|N\wedge \omega^{\sharp}|^2=\sum\limits_{i<j}u_{ij}^2=|\omega|^2.$$
Substituting the test functions $u_{ij}$ in (\ref{eq1}), we get
\begin{align}\label{index-testsum}
\sum\limits_{i<j}Q_f(u_{ij},u_{ij})=&\int_{M}\Big(\sum\limits_{i<j}|\nabla u_{ij}|^{2}-(\operatorname{Ric}_{f}^{\mathcal{N}}(N, N)+|\mathrm{II}^{M}|^{2}) |\omega|^2 \Big)e^{-f}d V_{M} \\
&-\int_{\partial M}\mathrm{II}^{\partial \mathcal{N}}(N, N) |\omega|^2e^{-f} d V_{\partial M}.\nonumber
\end{align}

Note that $$\sum\limits_{i<j}|\nabla u_{ij}|^2=\sum\limits_{i<j}\sum_{k=1}^n\langle D_{e_k}(N\wedge \omega^{\sharp}),\theta_i\wedge\theta_j\rangle^2=\sum_{k=1}^n |D_{e_k}(N\wedge \omega^{\sharp})|^2.$$
Using the orthogonal decompositions, we compute the squared norms as follows:
\begin{align}
&|D_{e_k}(N\wedge \omega^{\sharp})|^2=|D_{e_k}N\wedge \omega^{\sharp}+N\wedge D_{e_k}\omega^{\sharp}|^2 \nonumber\\
&=\left|(\overline{\nabla}_{e_k}N+\mathrm{II}^{\mathcal{N}}(e_k,N))\wedge\omega^{\sharp}+N\wedge(\overline{\nabla}_{e_k}\omega^{\sharp}+\mathrm{II}^{\mathcal{N}}(e_k,\omega^{\sharp}))\right|^2\nonumber\\
&=\left|(-Ae_k+\mathrm{II}^{\mathcal{N}}(e_k,N))\wedge\omega^{\sharp}+N\wedge(\nabla_{e_k}\omega^{\sharp}+\mathrm{II}^{\mathcal{N}}(e_k,\omega^{\sharp}))\right|^2\nonumber\\
&=|Ae_k\wedge\omega^{\sharp}|^2+|\mathrm{II}^{\mathcal{N}}(e_k,N)\wedge\omega^{\sharp}|^2+|N\wedge\nabla_{e_k}\omega^{\sharp}|^2+|N\wedge\mathrm{II}^{\mathcal{N}}(e_k,\omega^{\sharp})|^2\nonumber\\
&=|Ae_k|^2|\omega|^2-\langle Ae_k,\omega^{\sharp}\rangle^2+|\mathrm{II}^{\mathcal{N}}(e_k,N)|^2|\omega|^2+|\nabla_{e_k}\omega|^2+|\mathrm{II}^{\mathcal{N}}(e_k,\omega^{\sharp})|^2,\nonumber
\end{align}
where $A$ is the shape operator of $M^n$ such that $\langle AX,Y\rangle=\langle \mathrm{II}^{M}(X,Y), N\rangle$. Thus
\begin{equation*}
\sum\limits_{i<j}|\nabla u_{ij}|^2=|\mathrm{II}^{M}|^2|\omega|^2-| \mathrm{II}^{M}(\cdot,\omega^{\sharp})|^2+\sum_{k=1}^n\Big(|\mathrm{II}^{\mathcal{N}}(e_k,N)|^2|\omega|^2+|\mathrm{II}^{\mathcal{N}}(e_k,\omega^{\sharp})|^2\Big)+|\nabla\omega|^2.
\end{equation*}
By the free boundary property, i.e., $\eta=-\nu$ along $\partial M\subset\partial\mathcal{N}$, $N$ is also a unit normal vector field of $\partial M$ in $\partial\mathcal{N}$. It follows that
$$H^{\partial M}+\mathrm{II}^{\partial \mathcal{N}}(N, N)=H^{\partial\mathcal{N}}.$$
Putting these in (\ref{index-testsum}) and applying Lemma \ref{lem-nablanorm}, we obtain both formulae for the two cases of the Lemma.
\end{proof}

\section{Proof of the main theorems}\label{MTP}
We are now ready to prove the main theorems.

\textbf{Proof of Theorem \ref{thm-1}.}~ Let $k$ be the $f$-index of $M$, that is the number of negative eigenvalues of the weighted Jacobi operator $L_f$ of $M$ in (\ref{LEE}), and denote by $\{\phi_{p}\}_{p=1}^{\infty}$ a $L^{2}(M, e^{-f} d V_{M})$-orthonormal basis of the eigenfunctions  corresponding to the eigenvalues $\lambda_{1} \leq \lambda_{2} \leq  \ldots \leq \lambda_{k} < 0\leq \lambda_{k+1}\cdots$ of $L_f$. Following the approach of Ambrozio-Carlotto-Sharp \cite{ACS, ACS1}, we define the following linear map
$$
\begin{aligned}
\Phi: \mathcal{H}_{f}^{1}(M) & \longrightarrow \mathbb{R}^{d(d-1)k/2} \\
\omega & \longmapsto\left[\int_{M} u_{ij} \phi_{p} e^{-f}d V_{M}\right],
\end{aligned}
$$
where $\mathcal{H}_{f}^{1}(M)=\mathcal{H}_{Nf}^{1}(M)\cong H^1(M,\mathbb{R})$ in case $(1)$ and $\mathcal{H}_{f}^{1}(M)=\mathcal{H}_{Tf}^{1}(M)\cong H^{n-1}(M,\mathbb{R})$ in case $(2)$ of Theorem \ref{thm-1} respectively, $u_{ij}=\langle N\wedge \omega^{\sharp}, \theta_{i}\wedge\theta_j\rangle$ are the test functions as in Lemma \ref{lem-testindex},  $1\leq i<j\leq d$, $1\leq p\leq k$.

Assume by contradiction that $\mathrm{dim}\mathcal{H}_{f}^{1}(M)>d(d-1)k/2$. Then there would exist a nonzero $f$-harmonic $1$-form $\omega\in\mathcal{H}_{f}^{1}(M)$ such that
$\int_{M} u_{ij} \phi_{p} e^{-f}d V_{M}=0$ for all $1\leq i<j\leq d$ and all $p=1,\ldots,k$. This means that each $u_{ij}$ is $L^{2}(M, e^{-f} d V_{M})$-orthogonal to all the first $k$ eigenfunctions $\phi_{p}$. Thus from the Courant-Hilbert variational characterization of eigenvalues it follows that
$$
\sum_{i<j} Q_{f}(u_{ij}, u_{ij})\geq  \lambda_{k+1}\sum_{i<j}  \int_{M}u_{ij}^{2}e^{-f} d V_{M}= \lambda_{k+1}\int_{M}|\omega|^2e^{-f} d V_{M}\geq 0.
$$
In view of Lemma \ref{lem-testindex},  this is a contradiction with either hypothesis of Theorem \ref{thm-1}. \qed

\textbf{Proof of Theorem \ref{thm-2}.}~ Now since $\mathcal{N}^{n+1}$ is totally geodesic in $\mathbb{R}^{n+1}$, $\operatorname{Ric}_{f}^{\mathcal{N}}=\operatorname{Hess}^{\mathcal{N}}f\geq0$, $K^{\mathcal{N}}=0$ and $\mathrm{II}^{\mathcal{N}}=0$. Hence, if $\partial\mathcal{N}$ is strictly two-convex in $\mathcal{N}$, or if $\partial\mathcal{N}$ is strictly $f$-mean convex in $\mathcal{N}$, then the corresponding inequality assumption of Theorem \ref{thm-1} is satisfied and thus the conclusion follows. \qed


\begin{thebibliography}{99}

\bibitem{Ambrozio}
L. Ambrozio, \emph{Rigidity of area-minimizing free boundary surfaces in three-manifolds}, J. Geom. Anal. \textbf{25} (2015), 1001--1017.

\bibitem{ACS}
L. Ambrozio, A. Carlotto and B. Sharp, \emph{Comparing the Morse index and the first Betti number of minimal hypersurfaces}, J. Differ. Geom. \textbf{108} (2018), 379--410.

\bibitem{ACS1}
L. Ambrozio, A. Carlotto and B. Sharp,\emph{Index estimates for free boundary minimal hypersurfaces}, Math. Ann. \textbf{370} (2018), 1063--1078.

\bibitem{AH}
N. S. Aiex and H. Han, \emph{Index estimates for surfaces with constant mean curvature in 3-dimensional manifolds}, Calc. Var.  Partial Differential Equations. \textbf{60} (2021), 3--20.

\bibitem{BW}
E. Barbosa and Y. Wei, \emph{A compactness theorem of the space of free boundary $f$-minimal surfaces in three-dimensional smooth metric measure space with boundary}, J. Geom. Anal. \textbf{26} (2016), 1995-2012.

\bibitem{BPS}
 R. G. Bettiol, P. Piccione and B. Santoro, \emph{Deformations of free boundary CMC hypersurfaces}, J. Geom. Anal. \textbf{27} (2017), 3254--3284.

\bibitem{Bueler}
E. L. Bueler, \emph{The heat kernel weighted Hodge Laplacian on noncompact manifolds}, Trans. Amer. Math.Soc. \textbf{351} (1999), 683--713.

\bibitem{BK}
W. B\"{u}rger and E. Kuwert, \emph{Area-minimizing disks with free boundary and prescribed enclosed volume}, J. Reine Angew. Math. \textbf{621} (2008), 1--27.

\bibitem{CFP}
 J. Chen, A. Fraser and C. Pang, \emph{Minimal immersions of compact bordered Riemann surfaces with free boundary}, Trans. Amer. Math. Soc. \textbf{367} (2015), 2487--2507.

\bibitem{CG}
N. Chen and J. Q. Ge, \emph{Cohomology vanishing theorems for free boundary submanifolds}, arxiv:2106.05793.

\bibitem{CR}
K. Castro, C. Rosales, \emph{Free boundary stable hypersurfaces in manifolds with density and rigidity results}, J. Geom. Phys. \textbf{79} (2014), 14--28.

\bibitem{FL}
A. Fraser and M. Li, \emph{Compactness of the space of embedded minimal surfaces with free boundary in three-manifolds with nonnegative Ricci curvature and convex boundary}, J. Differ. Geom. \textbf{96} (2014), 183--200.

\bibitem{IR}
D. Impera and  M. Rimoldi, \emph{Index and first Betti number of $f$-minimal hypersurfaces: general ambients}, Ann. Mat. Pura Appl. \textbf{199} (2020), 2151--2165.

\bibitem{IRS}
D. Impera, M. Rimoldi and A. Savo, \emph{Index and first Betti number of $f$-minimal hypersurfaces and self-shrinkers}, Rev. Mat. Iberoam. \textbf{36} (2020), 817--840.

\bibitem{KG}
S. Keomkyo and Y. Gabjin, \emph{ Liouville-type theorems for weighted $p$-harmonic 1-forms and weighted $p$-harmonic maps}, Pacific J. Math. \textbf{305} (2020),  291--310.

\bibitem{MNG}
D. Maximo, I. Nunes and G. Smith, \emph{Free boundary minimal annuli in convex three-manifolds}, J. Differ.Geom. \textbf{106} (2017), 139--186.

 \bibitem{Yano}
K. Yano, \emph{Integral formulas in Riemannian geometry}, Pure and Applied Mathematics, vol. \textbf{1} Marcel Dekker, Inc., New York (1970).



\end{thebibliography}
\end{document}